\documentclass[11pt]{amsart}
\usepackage{amssymb, enumerate, color}

\def\Q{\mathbb{Q}}
\def\P{\mathbb{P}}
\def\A{\mathbb{A}}

\DeclareMathOperator{\lcm}{lcm}

\newcommand{\col}{\,{:}\,}
\newcommand{\tth}{^{\operatorname{th}}}
\newcommand{\sst}{^{\operatorname{st}}}

\theoremstyle{plain}
\newtheorem{thm}{Theorem}
\newtheorem{lem}{Lemma}

\theoremstyle{definition}
\newtheorem*{defn}{Definition}
\newtheorem{conj}{Conjecture}
\newtheorem{exmp}{Example}
\theoremstyle{remark}
\newtheorem*{rem}{Remark}

\newtheorem{case}{Case}

\textwidth 6.25in \oddsidemargin 0in \evensidemargin 0in


\begin{document}
\title[Polynomial Morphisms on Projective Space]{Rational Periodic Points for Degree Two Polynomial Morphisms on Projective Space}

\author[Hutz]{Benjamin Hutz}
\address{\newline
Department of Mathematics and Computer Science,\newline
Amherst College, 
P.O. 5000,\newline
Amherst, MA 01002-5000,
USA
}
\email{bhutz@amherst.edu}


\subjclass[2000]{
37F10
(primary),
14G05
(secondary);}
\keywords{Polynomial Maps, Dynamical Systems, Periodic Points}

\date{\today}

\maketitle

\begin{abstract}
	This article addresses the existence of $\Q$-rational periodic points for morphisms of projective space.  In particular, we construct an infinitely family of morphisms on $\P^N$ where each component is a degree $2$ homogeneous form in $N+1$ variables which has a $\Q$-periodic point of primitive period $\frac{(N+1)(N+2)}{2} + \left\lfloor \frac{N-1}{2}\right\rfloor$.  This result is then used to show that for $N$ large enough there exists morphisms of $\P^N$ with $\Q$-rational periodic points with primitive period larger that $c(k)N^k$ for any $k$ and some constant $c(k)$.
\end{abstract}


\section{Introduction and Statement of Results}
    Let $\phi:\P^1 \to \P^1$ be a morphism of degree 2 which has a totally ramified fixed point at infinity, in other words, a \emph{polynomial morphism}.  We will denote $\phi^n$ as the $n\tth$ iterate of $\phi$.  A point $P \in K$ is called \emph{periodic of period $n$ for $\phi$} if there is a positive integer $n$ such that $\phi^n(P)=P$.  If $n$ is the smallest such integer, it is called the \emph{primitive period of $P$ for $\phi$}.  Northcott's theorem \cite{Northcott} tells us that $\phi$ can have only finitely many rational periodic points defined over a number field and, hence, the primitive periods of rational periodic points must be bounded.  For $n=1$, $2$, and $3$ there are infinitely many examples of degree 2 polynomial maps defined over $\Q$ with $\Q$-rational periodic points with primitive period $n$.  Morton \cite{Morton2} showed that there are no such maps with $\Q$-rational primitive $4$-periodic points.  Flynn, Poonen, and Schaefer \cite{FPS} showed that there are no such maps with $\Q$-rational primitive $5$-periodic points and made the following conjecture:
    \begin{conj} \label{FPS_conj}
        For $n \geq 4$ there is no quadratic polynomial $f \in \Q[x]$ with a rational periodic point with primitive period $n$.
    \end{conj}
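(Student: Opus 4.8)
The plan is to translate the conjecture into a statement about rational points on the \emph{dynatomic modular curves}. Since every quadratic $f \in \Q[x]$ is affinely conjugate over $\Q$ to a unique polynomial $f_c(x) = x^2 + c$ with $c \in \Q$, and affine conjugation carries rational periodic points to rational periodic points bijectively, it suffices to decide, for each $n \ge 4$, whether there is some $c \in \Q$ for which $f_c$ has a rational point of primitive period $n$. The $n$-th dynatomic polynomial $\Phi_n(x,c) = \prod_{d \mid n}(f_c^d(x) - x)^{\mu(n/d)} \in \Z[x,c]$ has the feature that the affine curve $Y_1(n)\col \Phi_n(x,c) = 0$ parametrizes pairs $(c,P)$ with $P$ of \emph{formal} period $n$ for $f_c$; write $X_1(n)$ for a smooth projective model. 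The conjecture then asserts exactly that for $n \ge 4$ the set $X_1(n)(\Q)$ contains no point corresponding to a point of \emph{exact} period $n$, so after computing $X_1(n)(\Q)$ one must still discard the finitely many degenerate points where the formal period strictly exceeds the exact period.

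The first step is therefore the geometry of $X_1(n)$: it carries the order-$n$ automorphism $\sigma$ cycling a periodic orbit, $(c,P) \mapsto (c,f_c(P))$, and one passes to the quotient $X_0(n) = X_1(n)/\langle\sigma\rangle$, and to any further useful quotients, to bring the genus down to something workable. Then, for each fixed $n$, I would run the following: (i) produce an explicit model of a quotient curve $C_n$ of $X_1(n)$ on which the liftability question is still visible; (ii) determine enough of the Mordell--Weil group of $\mathrm{Jac}(C_n)$ to apply Chabauty--Coleman, elliptic Chabauty, or a Mordell--Weil sieve; (iii) enumerate $C_n(\Q)$; and (iv) pull each resulting value of $c$ back and certify that it yields no point of exact period $n$, typically by a congruence obstruction at a prime $p$ of good reduction via the Morton--Silverman period-lifting constraints (an orbit of length $n$ modulo $p$ lifts over $\Q$ only to period $n$, $n\ell$, or $n\ell p^e$ with $\ell \mid (p-1)$). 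This is precisely the shape of Morton's argument for $n = 4$, where $X_1(4)$ has genus $2$, and of the argument of Flynn, Poonen, and Schaefer for $n = 5$, where $X_1(5)$ has genus $14$ and they combined explicit descent with a Chabauty-style analysis of a low-rank quotient.

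The main obstacle, and the reason I would not expect this route to settle the conjecture for every $n \ge 4$, is uniformity. The $x$-degree of $\Phi_n$ equals $\sum_{d\mid n}\mu(n/d)2^d \sim 2^n$, so the genus of $X_1(n)$, and of every fixed-index quotient of it, grows exponentially in $n$, while Chabauty--Coleman requires the Mordell--Weil rank of the relevant Jacobian to be strictly below its dimension --- a hypothesis that fails generically and is, in practice, not verifiable unconditionally already for small $n$ (Stoll's treatment of $n = 6$ is conditional on the Birch--Swinnerton-Dyer conjecture). A genuinely uniform proof would have to bring in a new ingredient: the dynamical uniform boundedness conjecture of Morton and Silverman, which would at least cap the admissible periods and reduce matters to finitely many curves; or a sharpened global reduction argument in which a bounded set of primes suffices simultaneously for all $c$; or some structural arithmetic property of the tower $\{X_1(n)\}_n$ that forces the absence of the relevant rational points, in the spirit of the uniform torsion theorems for elliptic curves. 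Absent such an idea, the realistic target is to push the case-by-case analysis to the next few values of $n$.
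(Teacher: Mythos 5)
This statement is a \emph{conjecture}, not a theorem of the paper: it is Conjecture~\ref{FPS_conj}, due to Flynn, Poonen, and Schaefer, and the paper cites it without proof precisely because it remains open. There is therefore no ``paper's own proof'' to compare against. You have correctly recognized this. Your write-up is not a proof and does not claim to be one; it is an accurate account of the known partial results ($n=4$ by Morton via the genus-$2$ curve $X_1(4)$, $n=5$ by Flynn--Poonen--Schaefer via descent and Chabauty on a quotient of the genus-$14$ curve $X_1(5)$, and $n=6$ by Stoll conditionally on BSD), together with a fair assessment of the obstruction to uniformity: the genus of $X_1(n)$ grows roughly like $2^n$, Chabauty--Coleman requires a rank hypothesis that cannot be verified uniformly, and no structural argument analogous to the uniform torsion theorems for elliptic curves is currently available. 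The one thing worth flagging is a small risk of overclaiming in the framing: phrases like ``the plan is'' could be read as announcing a proof, when in fact the honest conclusion you reach --- that the case-by-case method cannot settle all $n \ge 4$ and a genuinely new idea (e.g., the Morton--Silverman uniform boundedness conjecture) is needed --- is exactly why this remains a conjecture. Had you produced a purported complete proof along these lines, it would necessarily contain a gap at the uniformity step; as a survey of the landscape, your discussion is sound.
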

    More recently, Stoll \cite{Stoll} has shown conditionally that there are no degree 2 polynomial maps with $\Q$-rational primitive $6$-periodic points.  For degree two rational maps, Manes \cite[Theorem 4]{Manes2} shows the existence of maps with $\Q$-rational periodic points of primitive period $4$ and provides evidence for no maps with $\Q$-rational points of primitive period $5$ or $6$.  This article examines the possible primitive period of a $\Q$-rational periodic point for a degree two polynomial morphism on $\P^N$ defined over $\Q$.
    \begin{defn}
        We define a \emph{polynomial} map $\phi:\P^N \to \P^N$ of degree $d$ with coordinates $[x_0,\ldots,x_N]$ to be a map defined as
        \begin{equation*}
            \phi(x_0,\ldots,x_N) = [\phi_0(x_0,\ldots,x_N),\ldots,\phi_{N-1}(x_0,\ldots,x_N),x_N^d],
        \end{equation*}
        where each $\phi_i$ is a homogeneous form of degree $d$ in the variables $x_0,\ldots,x_N$.  Such a map is a \emph{morphism} if $\phi_1,\ldots, \phi_{N-1}$ have no nontrivial common zeroes when $x_N=0$.
    \end{defn}
\subsection{A First Example}
    Consider a degree two polynomial map $\phi:\P^1 \to \P^1$ given by
    \begin{equation*}
        \phi(x,y) = [ax^2 + bxy + cy^2, y^2].
    \end{equation*}
    We wish to find constants $a$, $b$, and $c$ such that $P=[0,1]$ is a periodic point of primitive period $3$ for $\phi$.  To do so we choose any two distinct other points $P_1$ and $P_2$ and solve the three linear equations $\phi(P) = P_1$, $\phi(P_1) = P_2$, and $\phi(P_2)=P$ in the three unknowns $a$, $b$, and $c$ to find a suitable map $\phi$.  Since we will use a similar, albeit more complicated, construction in Theorem \ref{thm_main}, we explore this construction in detail for this simple example.

    We see that $$\phi([0,1]) = [c,1],$$ so we choose $c \neq 0$, say $c=1$.  Then we have $$\phi([1,1]) = [a+b+1,1].$$  We now choose $b$ so that $$\phi([1,1]) \neq [0,1] \text{ or } [1,1],$$ say $b=1-a$.  Then, we have $\phi([1,1]) = [2,1]$ and so $$\phi([2,1]) = [2a+3,1].$$  Finally, we choose $a=-3/2$ to have $\phi([2,1]) = [0,1]$, making the degree two polynomial map $$\phi([x,y]) = [-3/2x^2+5/2x+1,y^2]$$ have $[0,1]$ as a primitive $3$-periodic point.

    Trying to construct a $\Q$-rational primitive 4-periodic point in the same manner, at a minimum, requires more care.  The obstruction lies in having to solve four equations in three unknowns.  Conjecture \ref{FPS_conj} states that for a degree 2 polynomial morphism on $\P^1$ it is never possible to solve these larger systems of equations and the number of coefficients $\binom{2+2}{2} = 3$ is an upper bound on the primitive $\Q$-rational periods.

    Theorem \ref{thm_main} demonstrates an infinite family of polynomial maps on $\P^N$ with periodic points with primitive period larger than $\binom{N+2}{2}$.  Theorem \ref{thm2} shows that these families contain infinitely many maps which are in fact morphisms of $\P^N$.  Theorem \ref{thm3} uses Theorem \ref{thm_main} and Theorem \ref{thm2} to show that the primitive period of $\Q$-rational periodic points for polynomial morphisms of $\P^N$ grows faster than $c(k)N^k$ for any $k$ and some constant $c(k)$.

\subsection{The Main Results}
    In general, we can construct a degree $2$ polynomial map with a $\Q$-rational periodic point with primitive period equal to the number of coefficients of a quadratic form in $N+1$ variables $\binom{N+2}{2} = \frac{(N+1)(N+2)}{2}$, by choosing one coefficient with each successive iterate as we did above for $\P^1$.  We show that for $N \geq 2$ we can construct polynomial maps on $\P^N$ with a periodic point with primitive period larger than this value.
    \begin{thm} \label{thm_main}
        Let $N \geq 2$.  There is an infinite family of degree two polynomial maps $\phi:\P^N \to \P^N$ with a $\Q$-rational periodic point with primitive period
        \begin{equation*}
            \begin{cases}
                \leq 7=\frac{(N+1)(N+2)}{2} + 1 & \text{for } N=2\\
                \leq \frac{(N+1)(N+2)}{2} + \left\lfloor \frac{N-1}{2}\right\rfloor & \text{for } N \geq 3,
            \end{cases}
        \end{equation*}
        where $\lfloor x \rfloor$ denotes the greatest integer less than or equal to $x$.
        Moreover, the dimension of the family is at least $N$.
    \end{thm}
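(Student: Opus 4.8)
The plan is to generalize the $\P^1$ construction: prescribe an orbit $P_0\to P_1\to\cdots\to P_{n-1}\to P_0$ of length $n$ equal to the claimed bound and solve the linear conditions $\phi(P_j)=P_{j+1}$ for the coefficients of $\phi$. Working on the affine chart $x_N=1$ and writing $P_j=(\beta_{j,0},\dots,\beta_{j,N-1})$, the component $\phi_i$ restricts to an inhomogeneous polynomial $f_i$ of degree $\le 2$ in $N$ variables with $\binom{N+2}{2}$ coefficients, and the orbit conditions split componentwise into the linear systems $f_i(P_j)=\beta_{j+1,i}$, $j=0,\dots,n-1$ (indices mod $n$). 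All $N$ systems share the same coefficient matrix $V$, the $n\times\binom{N+2}{2}$ matrix of degree-$\le2$ monomials evaluated at the $P_j$; only the right-hand sides differ. For points in general position $V$ has full column rank and the systems are solvable exactly when $n\le\binom{N+2}{2}$, which is the trivial bound; the improvement must come from placing the $P_j$ in special position so that, for the right-hand sides produced by the prescribed orbit itself, every one of the $N$ systems stays consistent for $n$ larger than $\binom{N+2}{2}$ by $\lfloor\frac{N-1}{2}\rfloor$.

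To engineer this I would take the orbit to be \emph{mostly} that of a shift register: pick a periodic sequence $(u_t)_{t\in\Z/n}$ and let the leading coordinates of $P_j$ be the window $u_j,u_{j+1},\dots$, which forces the corresponding components of $\phi$ to be the coordinate shifts $f_i(x)=x_{i+1}$ --- costing no equations. Among the remaining coordinates I would introduce $\lfloor\frac{N-1}{2}\rfloor$ \emph{paired} coordinates whose update rule is declared to be a fixed quadratic monomial $x_{a_i}x_{b_i}$; this also costs no equations but forces $\beta_{j+1,i}=\beta_{j,a_i}\beta_{j,b_i}$, so those coordinate sequences are determined by the free ones, and only a single ``driver'' component genuinely has to be solved. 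The crux is then the linear-algebra claim that, with the data $(u_t)$ and the pairing chosen generically over $\Q$, the matrix $V$ has rank exactly $\binom{N+2}{2}$ while the driver system's right-hand side lies in its column space --- equivalently, the $\lfloor\frac{N-1}{2}\rfloor$ extra left-null vectors of $V$ annihilate the shifted coordinate vectors --- so that the driver system, though overdetermined by $\lfloor\frac{N-1}{2}\rfloor$ equations, is consistent. I expect essentially all the difficulty to be here: arranging the count of redundant equations to be exactly $\lfloor\frac{N-1}{2}\rfloor$ (more redundancy would force the orbit onto a proper subvariety and collapse the period; less would make the system inconsistent) while keeping everything rational.

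Granting this, I would (i) solve the consistent linear systems over $\Q$ to obtain $\phi$; (ii) check that for generic parameters the $n$ points $P_j$ are pairwise distinct, so $P_0$ has primitive period exactly $n$ --- the ``$\le$'' in the statement absorbing the degenerate members of the family; (iii) read off the dimension: after fixing the combinatorial type, the free parameters are those of $(u_t)$ surviving the driver recurrence together with whatever freedom remains in the solution of the linear systems, and I would check that at least $N$ of them survive modulo affine conjugacy, which gives the last assertion; and (iv) note that since the family is cut out by these linear and polynomial conditions over $\Q$ and is positive-dimensional, it contains infinitely many maps.

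The case $N=2$ falls outside the pairing scheme, since there is only one non-driver coordinate, so the extra step must be found by hand: I would prescribe seven points in $\A^2$ directly and, on top of the orbit conditions, impose the two linear constraints forcing the (generically one-dimensional) left-null vector of the $7\times6$ matrix $V$ to kill each of the two shifted coordinate vectors; a dimension count on seven-point configurations (fourteen coordinates, minus six for affine conjugacy, minus the two constraints) shows this locus is nonempty and rational, and exhibiting an explicit $\Q$-point on it produces a map with a rational $7$-periodic point. In every case the consistency verification of the second paragraph is the real obstacle; the distinctness, rationality, and dimension bookkeeping should be routine once the right structured family is in place.
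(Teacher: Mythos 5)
Your proposal takes a genuinely different route from the paper, but it has a real gap at the crux, which you yourself flag. The paper does not prescribe the whole orbit in advance and then test a global linear system for consistency; it builds the orbit greedily, one image at a time, choosing one set of coefficients $c_i(j,k)$ per image so that each new coordinate is linear in a single unknown (Lemma~\ref{lem1}). The extra $\lfloor(N-1)/2\rfloor$ steps beyond $\binom{N+2}{2}$ come from a quadratic-interpolation trick: after the off-diagonal coefficients are fixed, the restriction of each $\phi_i$ to a line of the form $[0,\ldots,0,x_k,1,\ldots,1,0,1]$ is a univariate quadratic, so once three such points have known images, the image of a fourth such point is forced. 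Each choice of a diagonal block $c_i(k,k)$ therefore advances the orbit by two steps, one chosen and one free, and the count of such doublings is $\lfloor(N-1)/2\rfloor$. This is a concrete, terminating construction in which nothing has to be checked for consistency a posteriori.

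Your scheme replaces this with a shift-register orbit, a set of ``paired'' coordinates whose components are frozen monomials, and a single driver component whose $n\times\binom{N+2}{2}$ Vandermonde-type system must be shown consistent despite being overdetermined by exactly $\lfloor(N-1)/2\rfloor$ equations. That consistency claim is the entire content of the theorem, and you state rather than prove it (``I expect essentially all the difficulty to be here''). Nothing in the proposal shows that (a) the left kernel of $V$ can be arranged to have dimension exactly $\lfloor(N-1)/2\rfloor$, (b) those kernel vectors simultaneously annihilate the driver right-hand side for a rational choice of the data $(u_t)$, or (c) this can be done while keeping the $n$ orbit points distinct so the period is primitive. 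You also freeze roughly half the components to monomials ($x_{i+1}x_N$ and $x_{a_i}x_{b_i}$), which is a much more degenerate family than the paper's; besides making the consistency analysis harder, it would make the morphism statement of Theorem~\ref{thm2} considerably more delicate, since many $\phi_i$ then vanish identically on $x_N=0$. Finally, the dimension count ``at least $N$'' is asserted, not derived; the paper gets it by explicitly exhibiting $N$ coefficients that remain free up to finitely many exclusions. As written, the proposal is a plausible strategy, not a proof.
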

    \begin{thm} \label{thm2}
        The infinite family of maps constructed in Theorem \ref{thm_main} contains infinitely many morphisms.
    \end{thm}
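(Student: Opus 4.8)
\emph{Proof strategy.} Recall from the proof of Theorem~\ref{thm_main} that the family is produced by fixing a normal form for a periodic orbit, solving a (redundant) linear system for the coefficients of $\phi_0,\dots,\phi_{N-1}$, and letting $N$ parameters $t=(t_1,\dots,t_N)$ vary; thus the family is parametrized by the $\Q$-points of an irreducible Zariski-open subset $V\subseteq\A^N_{\Q}$, with the coefficients of $\phi_t$ given by rational functions of $t$. By the definition of \emph{morphism} above, $\phi_t$ is a morphism exactly when $\phi_1,\dots,\phi_{N-1}$ have no common projective zero on $\{x_N=0\}$, and by elimination theory the locus $Z\subseteq V$ where this fails is Zariski-closed. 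So the theorem reduces to showing $Z\neq V$ — that \emph{some} member of the family is a morphism. Indeed, once $Z\neq V$, the set $V\setminus Z$ is a nonempty open subset of $\A^N_{\Q}$, hence contains $\{t\in\Q^N:R(t)\neq 0\}$ for some nonzero polynomial $R$, which is infinite for $N\ge 1$; and since the coefficients of $\phi_t$ depend nonconstantly on $t$, these infinitely many parameter values yield infinitely many distinct morphisms.

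The real work is to check $Z\neq V$, and the plan is to read this off the explicit construction. The orbit points in Theorem~\ref{thm_main} are arranged so that the orbit equations are linearly dependent — this redundancy is exactly what lets the primitive period exceed $\binom{N+2}{2}$ — and I expect the same arrangement to leave the restrictions $\phi_i|_{x_N=0}$ in a (near-)triangular shape, e.g.\ $\phi_i|_{x_N=0}=c_i x_i^2+(\text{a quadratic form in }x_0,\dots,x_{i-1})$. If so, the morphism condition collapses to the non-vanishing of the distinguished coefficients $c_i$: a common zero at infinity would force $x_0=0$, then $x_1=0$, \dots, and finally $[0:\dots:0]\notin\P^{N-1}$. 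The concrete step is therefore to exhibit one $\Q$-rational specialization of $t$ lying in $V$ for which these leading coefficients are all nonzero and the orbit does not degenerate, thereby producing an explicit morphism and proving $Z\neq V$.

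The main obstacle is precisely this last step: one must verify that imposing the long chain of orbit conditions does not force $\phi_1,\dots,\phi_{N-1}$ to share a zero on the hyperplane at infinity. Because the family has dimension at least $N$, there is room to spare for controlling the behaviour of $\phi$ on $\{x_N=0\}$; making this rigorous amounts to unwinding the orbit normal form and the structure of the linear system solved in Theorem~\ref{thm_main}, and then either exhibiting a single explicit morphism or identifying a one-parameter subfamily whose generic members are all morphisms.
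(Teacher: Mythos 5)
Your overall framework is sound and matches the logical skeleton of the paper's argument: the non-morphism locus is cut out by the vanishing of the resultant of $\phi_0,\dots,\phi_N$ (a single polynomial in the coefficients, hence in the free parameters $t$), so it suffices to show this resultant is not identically zero on the family, from which infinitude over $\Q$ follows. However, you explicitly defer the only substantive step --- verifying $Z \neq V$ --- and the strategy you sketch for it is unlikely to succeed as stated. The construction in Lemma~\ref{lem1} and Theorem~\ref{thm_main} chooses the off-diagonal coefficients $c_i(j,k)$ ($j<k<N$) one per iterate and the diagonal coefficients $c_i(j,j)$ last, and nothing in that process forces the restrictions $\phi_i|_{x_N=0} = \sum_{j\le k < N} c_i(j,k)x_jx_k$ into a triangular shape $\phi_i|_{x_N=0}=c_i x_i^2 + (\text{form in } x_0,\dots,x_{i-1})$; the cross terms $c_i(j,k)$ for various $j<k<N$ all survive at infinity, so the ``peel off $x_0$, then $x_1$, \dots'' argument does not apply. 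Simply exhibiting one explicit morphism is also not obviously easier: you would still need to control all $\binom{N+1}{2}$ surviving coefficients in each of $N$ forms.

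What the paper actually does is exhibit a specific maximal minor of the Macaulay matrix $M_{N+2}(\phi_0,\dots,\phi_N)$ (choosing, for each monomial of degree $N+2$, one row of the form $r\cdot\phi_i$ indexed via a carefully designed partition $S_0,\dots,S_N$ of the monomials) and then argues by a case analysis on linear dependencies among the chosen rows that this minor's determinant is not identically zero as a polynomial in the parameters. This is a concrete combinatorial argument exploiting the particular relations $c_i(k,N) = \pm 1 - c_i(k,k)$ imposed in Lemma~\ref{lem1}, none of which appears in your sketch. In short: your reduction is correct and routine, but the heart of the theorem --- showing the resultant does not vanish on the whole family --- remains unproved in your proposal, and the ``near-triangular'' heuristic you propose for it does not reflect the actual structure of the constructed maps.
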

    \begin{thm} \label{thm3}
        For $N$ large enough, there exists a degree two polynomial morphism of $\P^N$ with a $\Q$-rational periodic point with primitive period larger than $c(k)N^k$ for any $k$ and some constant $c(k)$ depending on $k$.
    \end{thm}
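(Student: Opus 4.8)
The plan is to amplify Theorems~\ref{thm_main} and~\ref{thm2} by a ``block product'' construction that multiplies primitive periods while adding only a few dimensions, so that a modest (polynomial) growth of the dimension produces a super‑polynomial growth of the period. The key structural point is that degree two polynomial morphisms are closed under such products. Suppose $\phi^{(1)}\colon\P^{M_1}\to\P^{M_1},\dots,\phi^{(s)}\colon\P^{M_s}\to\P^{M_s}$ are degree two polynomial morphisms, set $M=M_1+\cdots+M_s$, and give $\P^M$ coordinates split into $s$ blocks of sizes $M_1,\dots,M_s$ together with one common last coordinate $x_M$. Define $\Phi\colon\P^M\to\P^M$ so that its $i$-th block of coordinate forms is that of $\phi^{(i)}$ with $\phi^{(i)}$'s own homogenizing variable set equal to $x_M$, and with last coordinate $x_M^2$; since each $\phi^{(i)}_j$ is a homogeneous quadratic in $M_i+1$ variables, $\Phi$ is a degree two polynomial map in the sense of the Definition. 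One then checks that $\Phi$ is a morphism: any common zero of all coordinate forms of $\Phi$ forces $x_M=0$, and on that hyperplane the blocks decouple, so whichever block of variables is nonzero at the putative zero produces a nontrivial common zero for the corresponding $\phi^{(i)}$ on its own hyperplane at infinity, contradicting that $\phi^{(i)}$ is a morphism. Iterating, a block product of morphisms is again a morphism.

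Because the last coordinate of $\Phi$ is $x_M^2$, the affine chart $x_M=1$ is forward invariant, and there $\Phi$ is simply the product of the affine maps underlying the $\phi^{(i)}$. Hence a point whose $i$-th block is a $\Q$-rational periodic point of primitive period $n_i$ for the $i$-th affine map is a $\Q$-rational periodic point of $\Phi$ of primitive period $\lcm(n_1,\dots,n_s)$. So it suffices to produce, for each prime $p$ below some bound $r$, a degree two polynomial morphism of a \emph{low-dimensional} projective space carrying a $\Q$-rational (affine) periodic point of primitive period exactly $p$: the block product over all such $p$ then has a $\Q$-rational periodic point of primitive period $\prod_{p\le r}p$.

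These building blocks come from Theorem~\ref{thm_main} applied with a prescribed orbit rather than the maximal one: its construction prescribes a $\Q$-rational orbit $P_0\mapsto P_1\mapsto\cdots\mapsto P_{n-1}\mapsto P_0$ and solves linear equations for the quadratic coefficients, and on $\P^M$ this succeeds for any orbit length $n$ up to the bound of Theorem~\ref{thm_main}, which is $\sim M^2/2$; Theorem~\ref{thm2} guarantees that morphisms occur in the resulting family. Thus primitive period $p$ is realized on $\P^{M(p)}$ with $M(p)=O(\sqrt p)$. (Alternatively, one may use the weaker naive construction from the introduction, which also needs only $O(\sqrt p)$ coordinates, and which becomes a morphism after perturbing the prescribed orbit into general position.)

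Finally, the count. Fix $r$, form the block product over the primes $p\le r$ of the blocks above, and, if needed, pad by a further factor such as the squaring map $[x_0^2:\cdots:x_t^2]$ on $\P^t$ (a morphism fixing $[1:\cdots:1]$, so contributing $1$ to the $\lcm$) to reach a prescribed ambient dimension. The result is a degree two polynomial morphism of $\P^N$ with $N\le\sum_{p\le r}M(p)+O(1)=O(\pi(r)\sqrt r)=O(r^{3/2}/\log r)$ possessing a $\Q$-rational periodic point of primitive period $\prod_{p\le r}p\ge 2^{\pi(r)}$. Inverting the size estimate gives $r\gg N^{2/3}$, hence $\pi(r)\gg N^{2/3}/\log N$, so the primitive period is at least $2^{\,cN^{2/3}/\log N}$ for all large $N$; since this eventually dominates $c(k)N^k$ for every $k$, Theorem~\ref{thm3} follows. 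I expect the main obstacle to be the morphism verification for the block product (the decoupling on $x_M=0$ is the crux) together with confirming that the construction behind Theorem~\ref{thm_main} can genuinely be driven to a prescribed primitive period while keeping the map a morphism; the number-theoretic input ($\prod_{p\le r}p=e^{(1+o(1))r}$, Chebyshev) is classical.
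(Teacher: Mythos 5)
Your proof is correct and rests on the same key lemma as the paper's: the ``block product'' of polynomial morphisms (sharing only the common homogenizing variable) is again a polynomial morphism, and a product point has primitive period equal to the $\lcm$ of the block periods. This is exactly the paper's Lemma~\ref{lem2}, and your morphism verification via decoupling on $x_M=0$ reproduces the paper's argument. Where you diverge is in the choice of primes and block sizes. The paper fixes $s$, uses $s$ blocks of equal size $M=\lfloor N/s\rfloor$, and selects $s$ primes $p_1,\dots,p_s$ all near $N^2/(2s^2)$; the resulting period is $p_1\cdots p_s \gtrsim c(s)N^{2s}$, which establishes the stated bound by letting $s$ grow. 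You instead take \emph{all} primes $p\le r$, each in its own block of size $O(\sqrt{p})$ (using the fact, which the paper also invokes, that periods below the Theorem~\ref{thm_main} bound are realizable on the corresponding $\P^M$ and that Theorem~\ref{thm2} supplies morphisms among them), pad to the target dimension with a squaring map, and solve for $r$ in terms of $N$. This yields period $\ge 2^{\pi(r)}\ge 2^{cN^{2/3}/\log N}$, a genuinely super-polynomial lower bound, which implies the theorem and is quantitatively stronger than what the paper proves. The trade-off is that you need a slightly more careful bookkeeping (variable block sizes and $\pi(r)$ estimates) versus the paper's simpler ``$s$ equal blocks, $s$ comparable primes'' accounting, but the number-theoretic input you use (Chebyshev-level bounds on $\pi$) is no heavier than the paper's implicit appeal to primes in a fixed-ratio interval.
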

    In general, the bounds in Theorem \ref{thm_main} are not upper bounds on the primitive period.  Several examples of polynomial morphisms with $\Q$-rational points with larger primitive period are included at the end of the article.

\section{Proof of Theorem \ref{thm_main}}
    We denote the $i\tth$ coordinate of a point $P \in \P^N$ as $x_i(P)$ and denote the polynomial map $\phi:\P^N \to \P^N$ as
    \begin{equation*}
        x_i(\phi(x_0,\ldots,x_N)) =
            \begin{cases}
                \sum_{j=0}^{N-1}\sum_{k=j}^{N}c_i(j,k)x_jx_k & \text{for } i=0,\ldots,N-1\\
                c_N(N,N)x_N^2 & \text{for } i = N.
            \end{cases}
    \end{equation*}
    We denote the $n\tth$ image of $P$ by $\phi$ as $\phi^n(P)=P_n$.

    The method of construction is to choose appropriate values of the constants $c_i(j,k)$ so that the coordinates of each iterate are linear in at most two of the $c_i(j,k)$.  When we have chosen all of the $c_i(j,k)$ with $j \neq k$, we will then be able to choose $c_i(j,j)$ so that $\phi(P)$ is determined and $\phi(\phi(P))$ is linear in one of the $c_i(j,j)$, allowing the primitive period to increase beyond the trivial value $\binom{N+2}{2}$ determined by the number of coefficients.  We treat the case of $N=2$ separately.

    In Lemma \ref{lem1}, we choose the initial sequence of images by specifying $c_i(j,k)$ $0 \leq i \leq N$ for one pair $(j,k)$ for each image.  Then we proceed with the construction to increase the primitive period beyond the trivial lower bound.
    \begin{lem} \label{lem1}
        Let $\phi:\P^N \to \P^N$ be a degree two polynomial map.  We may choose the first $(N^2+N)/2-1$ images of $[0,\ldots,0,1]$ as
        \begin{align*}
            &[0,\ldots,0,1] \xrightarrow{\phi} [1,0,\ldots,0,1] \xrightarrow{\phi} [0,1,0,\ldots,0,1] \xrightarrow{\phi} \cdots \xrightarrow{\phi} [0,\ldots,0,1,1]\\
            &\xrightarrow{\phi} [1,1,0,\ldots,0,1] \xrightarrow{\phi} [0,1,1,0,\ldots,0,1] \xrightarrow{\phi} \cdots \xrightarrow{\phi} [0,\ldots,0,1,1,1]\\
        &\xrightarrow{\phi} [1,1,1,0,\ldots,0,1] \xrightarrow{\phi} [0,1,1,1,0,\ldots,0,1] \xrightarrow{\phi} \cdots \xrightarrow{\phi} [0,\ldots,0,1,1,1,1]\\
        &\vdots\\
            &\xrightarrow{\phi}[1,\ldots,1,0,1] \xrightarrow{\phi} [0,1,\ldots,1,1] \xrightarrow{\phi} [1,\ldots,1]
        \end{align*}
        by choosing all of the $c_i(j,k)$ except $c_i(0,N-1)$ and $c_i(k,k)$ for each $0 \leq k \leq N-1$ and each $0 \leq i \leq N-1$.  Furthermore, $x_i(\phi([1,\ldots,1]))$ is of the form $$a_ic_i(0,N-1) + b_i$$ for some constants $a_i,b_i$ for all $0 \leq i \leq N-1$.
    \end{lem}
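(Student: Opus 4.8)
The plan is to recast the lemma as a piece of combinatorial bookkeeping about subset sums, and then to choose the coefficients $c_i(j,k)$ one transition at a time. Observe first that every point occurring in the displayed orbit is the indicator vector $\chi_S$ of a subset $S\subseteq\{0,1,\dots,N\}$ with $N\in S$: one has $S_0=\{N\}$, and the remaining points run through the ``interval subsets'' $S=\{a,a+1,\dots,a+b-1\}\cup\{N\}$, taken in order of increasing block length $b=1,2,\dots,N$ and, within a block, increasing left endpoint $a=0,1,\dots,N-b$ (the terminal one, $b=N$ and $a=0$, being $[1,\dots,1]$). For such a point $x_i(\phi(\chi_S))=\sum_{j\le k,\,j,k\in S}c_i(j,k)$ for $i<N$ --- a plain sum in which each $c_i(j,k)$ occurs with multiplicity one, writing $c_i(N,N)$ for the coefficient of $x_N^2$ in $\phi_i$ --- while $x_N(\phi(\chi_S))=c_N(N,N)$, which I take to be $1$ once and for all. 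Thus prescribing $\phi(\chi_{S_n})=\chi_{S_{n+1}}$ amounts, for each $i<N$, to a single linear equation in the unknowns $\{c_i(j,k):\{j,k\}\subseteq S_n\}$, the last coordinate being automatic.

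Call the coefficients $c_i(k,k)$ for $0\le k\le N-1$ together with $c_i(0,N-1)$ the \emph{reserved} ones; these are exactly the coefficients the lemma leaves unchosen. The construction rests on the claim that for each $S_n$ preceding $[1,\dots,1]$ there is exactly one pair $\{j_n,k_n\}\subseteq S_n$ that is neither reserved nor contained in any earlier $S_m$ $(m<n)$. Granting this, I process the transitions in order: at step $n$ the equation $x_i(\phi(\chi_{S_n}))=x_i(\chi_{S_{n+1}})$ contains $c_i(j_n,k_n)$ with coefficient $1$ and otherwise only reserved coefficients and coefficients already fixed at earlier steps, so it has a unique solution for $c_i(j_n,k_n)$, and making that assignment forces $\phi(\chi_{S_n})=\chi_{S_{n+1}}$. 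Carried over all transitions this assigns, for each $i<N$, exactly the coefficient $c_i(N,N)$ and every off-diagonal $c_i(j,k)$ with $(j,k)\ne(0,N-1)$ --- precisely the coefficients named in the lemma --- while never touching a reserved coefficient.

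For the claim I would argue directly from the shape of an interval subset. The step $n=0$ is the transition out of $\{N\}$, whose only pair $(N,N)$ is new and non-reserved. For $n\ge1$ write $S_n=\{a,\dots,a+b-1\}\cup\{N\}$. If $b=1$, the pairs are $(a,a)$ [reserved], $(a,N)$, $(N,N)$, and the last was fixed at step $0$, so the unique new non-reserved pair is $(a,N)$. If $b\ge2$, every pair $(j,N)$ already occurs in the strictly earlier block-$1$ subset $\{j\}\cup\{N\}$, and every pair $(j,k)$ with $a\le j<k\le a+b-1$ and $k-j\le b-2$ already occurs in the strictly shorter interval subset $\{j,\dots,k\}\cup\{N\}$, so the only candidate is the ``diameter'' pair $(a,a+b-1)$. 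Thus a unique new non-reserved pair appears, unless that pair is itself reserved, which for the diameter pair means $(a,a+b-1)=(0,N-1)$, i.e.\ $a=0$ and $b=N$, i.e.\ $S_n=\{0,\dots,N\}=[1,\dots,1]$, and which for $(a,N)$ is impossible. The same observation delivers the ``furthermore'': no $S_m$ strictly before $[1,\dots,1]$ contains both $0$ and $N-1$ (an interval of length $<N$ cannot), so by the induction none of the already-assigned coefficients involves $c_i(0,N-1)$; hence $x_i(\phi([1,\dots,1]))=\sum_{j\le k}c_i(j,k)$ is $c_i(0,N-1)$ plus a quantity free of it, which is the asserted form $a_ic_i(0,N-1)+b_i$ with $a_i=1$.

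The hard part is not a single estimate but the combinatorial bookkeeping: checking that this ordering really does introduce exactly one new non-reserved pair at every step, that the reserved pairs are genuinely distinct (calling for a brief separate look at small $N$, e.g.\ $N=2$, where $(0,N-1)$ meets no diagonal), and that the one-equation-one-unknown solve is nondegenerate at each step --- which is automatic, since the monomial $x_{j_n}x_{k_n}$ occurs with unit multiplicity in the subset sum. I would isolate the interval combinatorics as a standalone claim and then feed it mechanically into the inductive construction above.
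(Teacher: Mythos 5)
Your construction is essentially the paper's, recast in a cleaner combinatorial language. The paper processes the orbit block by block (first all $c_i(k,N)$, then all $c_i(k,k+1)$, etc.) and at each transition flags that the image is linear in one new coefficient. You observe the same thing by viewing the orbit points as indicator vectors $\chi_S$ of interval subsets and showing that each $S_n$ contributes exactly one new, non-reserved pair $(j_n,k_n)$; both amount to solving one linear equation per transition for that coefficient in terms of the reserved diagonals. Your diameter-pair argument is a tidier justification than the paper's step-through, and it also produces the ``furthermore'' uniformly.

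One point you should make explicit. You conclude only that $x_i(\phi([1,\ldots,1]))$ is ``$c_i(0,N-1)$ plus a quantity free of it,'' but the lemma asserts that $b_i$ is a genuine \emph{constant}, not something that may still carry the free diagonals $c_i(m,m)$. The paper addresses this with a short Remark after the block-1 choices: since $c_i(m,N)$ is chosen so that $c_i(m,m)+c_i(m,N)$ is a constant, the first coordinate of the image contains $c_i(m,m)$ only through $c_i(m,m)(x_m^2-x_m)$, which vanishes whenever $x_m\in\{0,1\}$. That cancellation is what guarantees (by your own induction on block length) that every subsequently chosen $c_i(j,k)$ with $j<k<N$ is a numerical constant, and hence that $b_i = \sum_m\bigl(c_i(m,m)+c_i(m,N)\bigr) + c_i(N,N) + \sum_{j<k<N,\,(j,k)\ne(0,N-1)}c_i(j,k)$ is genuinely constant. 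Your scheme produces exactly the same relations as the paper's, so the conclusion does hold, but it needs to be stated rather than left implicit --- both for the lemma's phrasing and because Theorem~\ref{thm_main}'s proof repeatedly relies on this cancellation when it asserts linearity in a single $c_i(k,k)$ at later iterates.
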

    \begin{proof}
        Let $P=[0,\ldots,0,1]$.  Then $x_i(\phi(P))$ is linear in $c_i(N,N)$, so we can choose
        \begin{equation*}
            \phi(P)= [1,0,\ldots,0,1]
        \end{equation*}
        by choosing
        \begin{equation*}
            c_i(N,N) = \begin{cases}
              1 & \text{for } i=0 \text{ and } i=N\\
              0 & \text{for } 1 \leq i \leq N-1.
            \end{cases}
        \end{equation*}
        Next we choose the sequence of points
        \begin{align*}
            [1,0,\ldots,0,1] &\xrightarrow{\phi} [0,1,0,\ldots,0,1] \xrightarrow{\phi} \cdots \\
            &\xrightarrow{\phi} [0,0,\ldots,0,1,1] \xrightarrow{\phi} [1,1,0,\ldots,0,1],
        \end{align*}
        where
        \begin{equation*}
            [1,1,0,\ldots,0,1] = P_{N+1}.
        \end{equation*}
        We can do this because $x_i(\phi(P_j))$ is of the form
        \begin{equation*}
            x_i(\phi(P_j)) =
            \begin{cases}
                c_i(j-1,j-1) + c_i(j-1,N) + 1 & \text{for } i=0\\
                c_i(j-1,j-1) + c_i(j-1,N) & \text{for } 1 \leq i \leq N-1\\
                1 & \text{for } i =N.
            \end{cases}
        \end{equation*}
        We choose
        \begin{equation*}
            c_i(j-1,N) =
            \begin{cases}
                -1-c_i(j-1,j-1) & \text{for } i=0\\
                1 - c_i(j-1,j-1) & \text{for } i = j-1\\
                -c_i(j-1,j-1) & \text{otherwise}.
            \end{cases}
        \end{equation*}
        \begin{rem}
            With these choices of $c_i(N,N)$ and $c_i(k,N)$ for all $0 \leq k \leq N-1$, the image $x_k(\phi(x_0,x_1,\ldots,x_{N-1},1))$ contains terms of the form $c_k(i,i)x_i^2 - c_k(i,i)x_i$ for all $0 \leq i \leq N$ and $0 \leq k <N-1$.  Consequently, if $x_i=1$ or $x_i=0$, then $c_k(i,i)$ does not appear in the $k\tth$ coordinate of the image.
        \end{rem}
        Next we choose the sequence of images
        \begin{align*}
            [1,1,0,\ldots,0,0,1] &\xrightarrow{\phi} [0,1,1,0,\ldots,0,1] \xrightarrow{\phi} \cdots \\
            &\xrightarrow{\phi} [0,\ldots,0,1,1,1] \xrightarrow{\phi} [1,1,1,0,\ldots,0,1]
        \end{align*}
        until
        \begin{equation*}
            P_{2N-1} = [1,1,1,0,\ldots,0,1].
        \end{equation*}
        We can do this because we have already chosen $c_i(k,N)$ for all $0 \leq k \leq N-1$ and $c_i(N,N)$, causing the $i\tth$ coordinate for all $0 \leq i \leq N-1$ of each image in this sequence to be linear only in the single coefficient $c_i(k,k+1)$ for some $0 \leq k \leq N-2$.

        Next we choose the sequence of images
        \begin{align*}
            [1,1,1,0,\ldots,0,1] &\xrightarrow{\phi} [0,1,1,1,0,\ldots,0,1] \xrightarrow{\phi} \cdots \\
            &\xrightarrow{\phi} [0,\ldots,0,1,1,1,1]  \xrightarrow{\phi} [1,1,1,1,0,\ldots,0,1].
        \end{align*}
        Since we have already chosen $c_i(N,N)$, $c_i(k,N)$ for all $0 \leq k \leq N-1$, and $c_i(k,k+1)$ for all $0 \leq k \leq N-2$ and $0 \leq i \leq N-1$, the $i\tth$ coordinate of these iterates is linear in $c_i(k,k+2)$ for some $0 \leq k \leq N-3$.

        We repeat this process until we have
        \begin{align*}
            \phi(P_{(N^2+N)/2 -3}) &= [1,\ldots,1,0,1,1] \quad  (\text{linear in }c_i(0,N-2))\\
            \phi(P_{(N^2+N)/2 -2}) &= [0,1,\ldots,1]\quad  (\text{linear in }c_i(1,N-1))\\
            \phi(P_{(N^2+N)/2 -1}) &= [1,\ldots,1] \quad  (\text{linear in }c_i(0,N-1)).
        \end{align*}
        The only coefficients not yet chosen are $c_j(i,i)$ for all $0 \leq i \leq N-1$ and $c_j(0,N-1)$.  We also know that $x_i(\phi(P))$ is linear in $c_i(0,N-1)$ for all $0 \leq i \leq N-1$.
    \end{proof}
    We are now ready to increase the primitive period beyond the trivial lower bound.
    \begin{proof}[Proof of Theorem \ref{thm_main}]
        \begin{case}($N=2$)

            We begin where Lemma \ref{lem1} finished.  We have chosen $c_i(2,2)$ for all $0 \leq i \leq 2$ and $c_i(0,2)$ and $c_i(1,2)$ for all $0 \leq i \leq 1$ to have the sequence of points
            \begin{equation*}
                [0,0,1] \xrightarrow{\phi} [1,0,1] \xrightarrow{\phi} [0,1,1] \xrightarrow{\phi} [1,1,1].
            \end{equation*}
            Now we choose $c_i(0,N-1) = c_i(0,1)$ so that
            \begin{equation*}
                \phi([1,1,1]) = [0,K(0,1),1]
            \end{equation*}
            for some constant $K(0,1) \not\in \{0,1\}$.  Each coordinate $x_i(\phi([0,K(0,1),1]))$ is linear in $c_i(1,1)$.  We then choose the $c_i(1,1)$ so that
            \begin{equation*}
                \phi([0,K(0,1),1])) = [0,K(1,1),1],
            \end{equation*}
            where $K(1,1)$ is a constant with $K(1,1) \not\in  \{0,1,K(0,1)\}$.  So we have three points chosen of the form $[0,x_1,1]$ whose images are given by
            \begin{align*}
                [0,0,1] &\xrightarrow{\phi} [1,0,1]\\
                [0,1,1] &\xrightarrow{\phi} [1,1,1]\\
                [0,K(0,1),1] &\xrightarrow{\phi} [0,K(1,1),1].
            \end{align*}
            Since $x_i(\phi[0,x_1,1])$ is a quadratic polynomial in $x_1$ for $0 \leq i \leq 1$, the image $\phi([0,K(1,1),1])$ is determined by these three known points and is of the form
            \begin{equation*}
                \phi([0,K(1,1),1]) = [k_0,k_1,1]
            \end{equation*}
            for some constants $k_0$ and $k_1$.  Note that we may need to exclude finitely many choices of $K(0,1)$ and $K(1,1)$ (and hence of $c_1(0,1)$ and $c_1(1,1)$) so that $k_0 \not\in \{0,1\}$.  So we have that each $x_i(\phi([k_0,k_1,1]))$ is linear in $c_i(0,0)$ and we choose the $c_i(0,0)$ so that
            \begin{equation*}
                \phi(\phi([0,K(1,1),1])) = [0,0,1].
            \end{equation*}
            This is a primitive $7$-periodic point, and the family is dimension $2$ since we can choose $c_1(0,1)$ and $c_1(1,1)$ arbitrarily (with finitely many exceptions).

            It is easy to modify this construction to get points with primitive periods $1,\ldots, 6$ because at each stage we are linear in at most two variables.  So we simply choose the constant so that $\phi^n(P) = [0,0,1]$ at the appropriate iterate.  The dimension of these families is larger since there are more free coefficients.
        \end{case}
        \begin{case} ($N \geq 3$)

            We begin where Lemma \ref{lem1} finished and choose the $c_i(0,N-1)$ so that we have $$\phi(P_{(N^2+N)/2})=[0,K(0,N-1),1,\ldots,1]$$ where $K(0,N-1) \not\in \{0,1\}$ is a constant.  Each coordinate $x_i(\phi([0,K(0,N-1),1,\ldots,1]))$ is linear in $c_i(1,1)$.
            Choose the $c_i(1,1)$ to have
            \begin{equation*}
                \phi([0,K(0,N-1),1,\ldots,1]) = [K(1,1),1,\ldots,1,0,1],
            \end{equation*}
            where $K(1,1) \not\in\{0,1\}$ is some constant.  Now we have that the $i\tth$ coordinate of the image of $[K(1,1),1,\ldots,1,0,1]$ is linear in $c_i(0,0)$ for all $0 \leq i \leq N-1$.
            So we choose the $c_i(0,0)$ so that
            \begin{equation*}
                \phi([K(1,1),1,\ldots,1,0,1]) = [0,K(0,0),1,\ldots,1]   
            \end{equation*}
            for some constant $K(0,0) \not\in \{0,1,K(0,N-1)\}$.
            Note that there are three points of the form $[0,x_1,1,\ldots,1]$ whose images are given by
            \begin{align*}
                [0,1,1,\ldots,1] &\xrightarrow{\phi} [1,\ldots,1]\\
                [0,K(0,N-2),1,\ldots,1] &\xrightarrow{\phi} [K(1,1),1,\ldots,1,0,1]\\
                [0,0,1,\ldots,1] &\xrightarrow{\phi} [1,1,\ldots,1,0,1].
            \end{align*}
            Since each $x_i(\phi([0,x_1,1,\ldots,1]))$ is a degree 2 polynomial in $x_1$, the three known points and their images completely determine the image of any point of the form $[0,x_1,1,\ldots,1]$.  The $0\tth$ coordinate is a non-constant function of $x_1$ and the $(N-1)\sst$ coordinate is a non-constant function of $x_1$.  The remaining coordinates take on a constant value of $1$.  Therefore, we have
            \begin{equation*}
                \phi([0,K(0,0),1,\ldots,1]) = [k_0,1,1,\ldots,1,k_{N-1},1]
            \end{equation*}
            for some constants $k_0$ and $k_{N-1}$ with $k_{N-1} \not\in \{0,1\}$ (again we may need to exclude finitely many choices of $K(1,1)$ and $K(0,N-2)$ so that $k_{N-1} \not\in \{0,1\}$).  In particular, since the $c_i(0,0)$ are already chosen, each \\
            $x_i(\phi([k_0,1,1,\ldots,1,k_{N-1},1]))$ is linear in $c_i(N-1,N-1)$.
            From our choice of $[0,K(0,0),1,\ldots,1]$ we have determined $\phi([0,K(0,0),1,\ldots,1])$ and have $\phi(\phi([0,K(0,0),1,\ldots,1]))$ as our next iterate to consider.  We have thus increased the primitive period of $[0,\ldots,0,1]$ by two with the choice of the $c_i(0,0)$.

            If $N=3$ we are done since we are linear in the $c_i(N-1,N-1)$ and they are the only unchosen coefficients; so we choose the $c_i(N-1,N-1)$ to make the point periodic.

            For $N > 3$ we repeat the process.  Choose the $c_i(N-1,N-1)$ to get
            \begin{equation*}
                [0,0,K(N-1,N-1),1,\ldots,1,0,1]
            \end{equation*}
            with $K(N-1,N-1) \not\in \{0,1\}$.
            The coordinates of the image are linear in the $c_i(2,2)$ so we choose
            \begin{equation*}
                \phi([0,0,K(N-1,N-1),1,\ldots,1,0,1]) = [0,0,K(2,2),1,\ldots,1,0,1],
            \end{equation*}
            for $K(2,2) \not\in \{0,1,K(N-1,N-1)\}$.  The image \\
            $\phi([0,0,K(2,2),1,\ldots,1,0,1])$ is completely determined since we have three points of the form
            $[0,0,x_2,1,\ldots,1,0,1]$ whose images are known.  These images are
            \begin{align*}
                [0,0,K(N-1,N-1),1,\ldots,1,0,1] &\xrightarrow{\phi} [0,0,K(2,2),1,\ldots,1,0,1]\\
                [0,0,1,1,\ldots,1,0,1] &\xrightarrow{\phi} [0,0,0,1,\ldots,1,1,1,1]\\
                [0,0,0,1,\ldots,1,0,1] &\xrightarrow{\phi} [0,0,0,0,1,\ldots,1,1,1,1].
            \end{align*}
            We have
            \begin{equation*}
                \phi([0,0,K(2,2),1,\ldots,1,0,1]) = [0,0,x,y,1,\ldots,1,z,1]
            \end{equation*}
            for some constants $x$, $y$, and $z$.  Note that we may need to exclude finitely many choices of $K(N-1,N-1)$ and $K(2,2)$ so that $y \not\in \{0,1\}$.  We have already chosen the $c_i(2,2)$ and the $c_i(N-1,N-1)$, so each \\
            $x_i(\phi(\phi([0,0,K(2,2),1,\ldots,1,0,1])))$ is linear in $c_i(3,3)$, again increasing the primitive period by two with the choice of a single set of coefficients $c_i(2,2)$.

            Continuing in this manner, we choose the $c_i(k,k)$ to get
            \begin{equation*}
                [0,\ldots,0,K(k,k),1,\ldots,1,0,1],
            \end{equation*}
            where $K(k,k)\not\in \{0,1\}$ and is the $(k+1)\sst$ coordinate.  The $i\tth$ coordinate of the image is linear in $c_i(k+1,k+1)$.  We choose
            \begin{equation*}
                \phi([0,\ldots,0,K(k,k),1,\ldots,1,0,1]) = [0,\ldots,0,K(k+1,k+1),1,\ldots,1,0,1],
            \end{equation*}
            where $K(k+1,k+1)$ is the $(k+1)\sst$ coordinate and $K(k+1,k+1) \not\in \{0,1,K(k,k)\}$.
            The image $\phi([0,\ldots,0,K(k+1,k+1),1,\ldots,1,0,1])$ is completely determined since we have three points of the form \\
             $[0,\ldots,0,x_{k+1},1,\ldots,1,0,1]$ whose images are known; they are
            \begin{align*}
                [0,\ldots,0,K(k,k),1,\ldots,1,0,1] &\xrightarrow{\phi} [0,\ldots,0,K(k+1,k+1),1,\ldots,1,0,1]\\
                [0,\ldots,0,1,1,\ldots,1,0,1] &\xrightarrow{\phi} [0,\ldots,0,0,1,\ldots,1,1,1,1]\\
                [0,\ldots,0,0,1,\ldots,1,0,1] &\xrightarrow{\phi} [0,\ldots,0,0,0,1,\ldots,1,1,1,1].
            \end{align*}
            We have
            \begin{equation*}
                \phi([0,\ldots,0,K(k+1,k+1),1,\ldots,1,0,1]) = [0,\ldots,0,x,y,1,\ldots,1,z,1]
            \end{equation*}
            for some constants $x$, $y$, and $z$.  Note that we may need to exclude finitely many choices of $K(k,k)$ and $K(k+1,k+1)$ so that $y \not\in \{0,1\}$.  We have already chosen the $c_i(k+1,k+1)$ and the $c_i(N-1,N-1)$ so each $x_i(\phi(\phi([0,\ldots,0,K(k+1,k+1),1,\ldots,1,0,1])))$ is linear in $c_i(k+2,k+2)$, again increasing the primitive period by $2$.

            We continue this process until the only non-chosen coefficients are either $\{c_i(N-2,N-2),c_i(N-3,N-3)\}$ or $\{c_i(N-2,N-2)\}$. In this first case, we do not have enough unchosen coefficients remaining to increase the primitive period further beyond the trivial value, so we simply choose the $c_i(N-3,N-3)$ to have the point
            \begin{equation*}
                [0,\ldots,0,K(N-3,N-3),1,1]
            \end{equation*}
            with $K(N-3,N-3) \not\in \{0,1\}$.  Each $x_i(\phi([0,\ldots,0,K(N-3,N-3),1,1]))$ is linear in $c_i(N-2,N-2)$.
            We have now reduced to the second case and choose the $c_i(N-2,N-2)$ to have
            \begin{equation*}
                \phi([0,\ldots,0,K(N-3,N-3),1,1])=[0,0,\ldots,0,1],
            \end{equation*}
            making the point periodic of primitive period $\frac{(N+1)(N+2)}{2} + \lfloor \frac{N-1}{2} \rfloor$.

            Note, that along the way we were able to choose $$\{c_1(0,N-1),c_1(0,0),c_0(1,1),\ldots,c_{N-2}(N-3,N-3), c_2(N-1,N-1)\}$$ arbitrarily, except for excluding a finite set of values, making this an infinite family of dimension $N$.

            It is easy to modify this construction to get points with periods \\
            $< \frac{(N+1)(N+2)}{2} + \left\lfloor \frac{N-1}{2} \right\rfloor$ since at each stage we are linear in at most two variables.  So we simply choose the coefficients so that $\phi^n(P) = [0,\ldots,0,1]$ at the appropriate iterate.  The dimension of these families is larger since there are more free coefficients.
        \end{case}
    \end{proof}

    \section{Proof of Theorem \ref{thm2}}
        We will use the theory of Macaulay resultants to show that we can choose the coefficients of the maps in Theorem \ref{thm_main} so that they are morphisms; in other words, so that $\phi_0,\ldots,\phi_N$ have no nontrivial common zeroes.  Following \cite{Macaulay}: given $N+1$ homogeneous forms $F_0,\ldots,F_N$ of degree $d_i$ in $N+1$ variables $x_0,\ldots,x_N$, construct a matrix denoted $M_d(F_0,\ldots,F_N)$ where $d=1+\sum_i (d_i-1)$.  The columns of $M_d$ correspond to the monomials of degree $d$ in the variables $x_0,\ldots,x_N$, and the rows correspond to polynomials of the form $rF_i$ where $r$ is a monomial such that $\deg(rF_i)=d$.  The entries of $M_d$ are the coefficients of the column monomials in the row polynomials.  The matrix has $\binom{N+d}{d}$ columns and the number of rows corresponding to each $F_i$ is $\binom{N+d-d_i}{d-d_i}$.  It is the transpose of the matrix of the linear map
        \begin{equation*}
            (P_0,\ldots,P_N) \mapsto P_0F_0 + \cdots + P_NF_N,
        \end{equation*}
        where $P_i$ is homogenous of degree $d-d_i$.  Consider the maximal minors of $M_d(F_0,\ldots,F_N)$.  The determinants of these minors are polynomials in the coefficients of $F_0,\ldots,F_N$.  Let $R$ be the greatest common divisor of these determinants (as polynomials in the coefficients).  Then $R$ is called the \emph{resultant} of $F_0,\ldots,F_N$ and (among other properties) it satisfies $R=0$ if and only if the forms $F_0,\ldots,F_N$ have a common nontrivial zero.

        \begin{proof}[Proof of Theorem \ref{thm2}]
            We are in the case of $N+1$ homogeneous forms $\phi_i$ in $N+1$ variables $x_0,\ldots,x_N$.  We have each $\phi_i$ is degree $2$ and hence $d=N+2$.  We will show that the Macaulay matrix has a maximal minor that has nonzero determinant and hence that the resultant is nonzero for infinitely many maps in the family.  In the matrix there are $\binom{2N+2}{N+2}$ columns corresponding to all of the monomials with degree $N+2$ and $(N+1)\binom{2N}{N}$ rows corresponding to the $\binom{2N}{N}$ monomials of degree $d-2$ for each of the $N+1$ forms $\phi_i$.  We need to extract a $\binom{2N+2}{N+2} \times \binom{2N+2}{N+2}$ minor with nonzero determinant.  We first consider the case of largest possible period from Theorem \ref{thm_main}.

            For $N=2$ we can write down the matrix (but do not do so here) and explicitly check that it has a maximal minor with nonzero determinant.

            For $N \geq 3$ define (with $N-2$ replaced with $N-1$ for $N=3$)
            \begin{align*}
                S_N &= \{ F \col \deg(F) = d, x^2_N \mid F\}\\
                S_{N-2} &= \{F \col \deg(F) = d, x^2_N \nmid F, \text{ and } x^2_{N-2} \mid F\}\\
                S_{N-1} &= \{F \col \deg(F) = d, x^2_N \nmid F,x^2_{N-2} \nmid F, \text{ and } x^2_{N-1} \mid F\}\\
                S_{N-3} &= \{F \col \deg(F) = d, x^2_N \nmid F, x_{N-1}^2 \nmid F, x^2_{N-2} \nmid F, \text{ and } x^2_{N-3} \mid F\}\\
                S_{N-4} &= \{F \col \deg(F) = d, x^2_N \nmid F, \ldots ,x^2_{N-3} \nmid F, \text{ and } x^2_{N-4} \mid F\}\\
                &\vdots\\
                S_0 &= \{F \col \deg(F) = d, x^2_N \nmid F, \ldots, x^2_1\nmid F, \text{ and } x^2_{0} \mid F\}.
            \end{align*}
            Order the columns in reverse lexicographic order, $x_N > x_{N-1} > \cdots > x_0$, with the largest to the left.  For the columns corresponding to a monomial in $S_N$, choose the row with a $1$ on the diagonal (the row contains all $0$'s except one entry which is $1$ since $\phi_N(x_0,\ldots,x_N) = x_N^2$).  For columns corresponding to monomials in $S_{2k}$ with $k \neq 0$, choose the row with $c_{2k}(2k,2k)$ on the diagonal.  For columns corresponding to monomials in $S_{2k-1}$ with $k\neq 1$, choose the row with $c_{2k}(2k-1,2k-1)$ on the diagonal.  For $S_1$ we choose the row with $c_0(1,1)$ on the diagonal, and for $S_0$ we choose the row with $c_1(0,0)$ on the diagonal.  Finally, columns corresponding to monomials in $S_{N-2}$ we fix $i>1$ odd and choose the row with $c_i(N-2,N-2)$ on the diagonal (use $S_{N-1}$ and $c_i(N-1,N-1)$ for $N=3$).

            We have two facts to verify:
            \begin{enumerate}
                \item These choices contain no duplicate rows.
                \item The resulting minor has nonzero determinant.
            \end{enumerate}
            The first is clear since $S_i$ and $S_j$ are disjoint for $i \neq j$ and each row associated to an element of $S_k$ has at most one entry containing a $c_i(k,k)$.

            For the second, we start by examining the entries in each row.  Each row associated to $\phi_i$ for $i \neq N$ contains a $c_i(N-2,N-2)$ (or $c_i(N-1,N-1)$ for $N=3$) whose value depends on at least $c_i(N-3,N-3)$ (or $c_i(1,1)$ for $N=3$) so is not identically $0$.  In addition, each of these rows contains a corresponding
            \begin{equation*}
                c_i(N-2,N) = \begin{cases}
                    -1-c_i(N-2,N-2) & \text{for } i=0\\
                    1 - c_i(N-2,N-2) & \text{for } i = N-2\\
                    -c_i(N-2,N-2) & \text{otherwise}.
                \end{cases}
            \end{equation*}
            For $x_1^2$ we have $c_0(1,1)$ and $c_0(1,N)=-1-c_0(1,1)$.  For $x_0^2$, we have $c_1(0,0)$ and $c_1(0,N)=-c_0(1,1)$.  For $x_{2k}^2$ with $k>0$, there is a $c_{2k}(2k,2k)$ and a $c_{2k}(2k,N) =1 - c_{2k}(2k,2k)$.  For $k>1$ there is a $c_{2k}(2k-1,2k-1)$ and a $c_{2k}(2k-1,N)=1-c_{2k}(2k-1,2k-1)$.  The rest of the entries are either constants or depend on $c_{1}(0,N-1)$.  Also note that each row contains each $c_i(k,k)$ at most once (in addition to the corresponding $c_i(k,N)$).

            Note that $c_i(k,k)$ and $c_i(k,N)$ are possibly linearly dependent and that our choice of ordering has $c_i(k,k)$ appearing farther right in the matrix than $c_i(k,N)$.  For the other entries, we are choosing one coefficient per iteration, so they are either constant, independent, or the next depends on the previous in a quadratic (or higher) fashion (since each $\phi_i$ is degree $2$).

            Assume that we have some linear combination of the rows that produces a row identically $0$.   Each row contains a $c_i(k,k)$ on the diagonal for some $i$ and $k$ and a $c_i(k,N)$ in some other entry.  For the linear combination to result in $0$, there are three cases to consider.
            \setcounter{case}{0}
            \begin{case} \label{case1}
                  Assume two rows in the combination contain $c_i(k,k)$ and $c_i(k,N)$ in the same column.  By our choice of ordering, the respective $c_i(k,N)$ and $c_i(k,k)$ in those rows would not be in the same column.  Hence, we must also include rows in the combination that contain $c_i(k,N)$ and $c_i(k,k)$ in the corresponding columns.  Again by our choice of ordering, we need to include at least two rows to do this and then we still have unpaired $c_i(k,k)$ and $c_i(k,N)$ as before.  Therefore, we cannot choose any number of rows so that all of the $c_i(k,k)$ and $c_i(k,N)$ are paired by column.
            \end{case}
            \begin{case} \label{case2}
                Notice that by our choice of the $S_i$ we have guaranteed that we cannot have $c_j(N-2,N-2)$ and $c_i(k,k)$ in the same column for any $k \neq N-2$.  Let $j$ be such that $c_j(N-2,N-2)$ is on the diagonal of the minor.  Assume two rows in the combination have $c_i(N-2,N-2)$ and $c_j(N-2,N-2)$ in the same column for $i \neq j$.  But with $c_j(N-2,N-2)$ used for $S_{N-2}$, we must have that the row containing $c_i(N-2,N-2)$ also contains $c_i(k,k)$ for some $k \neq N-2$.  As in Case \ref{case1}, we are unable to find a combination of rows that pairs all of the $c_i(k,k)$ and $c_i(k,N)$.
            \end{case}
            \begin{case}
                Assume we have $c_i(k,k)$ and $c_i(k,N)$ paired with constants to get a combination of rows identically $0$.  However, every row containing a $c_i(k,k)$ with $k\neq N-2$ also contains $c_j(N-2,N-2)$ for some $j$.  These $c_j(N-2,N-2)$ must be paired either with constants or with other $c_t(N-2,N-2)$.  However, they cannot be paired with constants since the $c_i(k,k)$ are already paired with constants in a combination that results in $0$, and $c_i(k,k)$ and $c_j(N-2,N-2)$ are not related in a linear fashion.  Case \ref{case2} eliminates the possibility of pairing with another $c_t(N-2,N-2)$ for some $t$.  So we must have $k = N-2$.  Then all of the rows in the combination are associated to the same $\phi_j$ and, hence, entries in columns cannot be paired appropriately to result in a combination of $0$.
            \end{case}
            Therefore, no linear combination can have all entries as $0$ and the determinant of this minor is not identically $0$.  Therefore, there are infinitely many choices of the coefficients that produce a map that is a morphism.

            For the families with a periodic point with smaller primitive period, the matrix is similar but with more free constants, so similar choices of rows will also produce a minor with nonzero determinant.
    \end{proof}

\section{Proof of Theorem \ref{thm3}}

\begin{lem} \label{lem2}
    Given $\phi_1: \P^N \to \P^N$ a polynomial morphism with $P_1$ of primitive period $n$ and $\phi_2:\P^M \to \P^M$ a polynomial morphism with $P_2$ of primitive period $m$, then there exists a polynomial morphism $\psi: \P^{N+M} \to \P^{N+M}$ and a point $P$ with primitive period $\lcm(n,m)$.
\end{lem}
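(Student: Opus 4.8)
The plan is to realize $\psi$ as the ``product'' of $\phi_1$ and $\phi_2$ along their common affine chart (the locus where the last homogeneous coordinate is nonzero). Throughout I assume $\phi_1$ and $\phi_2$ have the same degree $d$; this is the only case needed for Theorem~\ref{thm3}, where every map has degree $2$, and the construction below outputs another polynomial morphism of degree $d$, so it can be iterated. Write $\phi_1 = [f_0:\cdots:f_{N-1}:x_N^d]$, with the $f_i$ homogeneous of degree $d$ in $x_0,\dots,x_N$, and $\phi_2 = [g_0:\cdots:g_{M-1}:y_M^d]$, with the $g_j$ homogeneous of degree $d$ in $y_0,\dots,y_M$; being a morphism means the $f_i$ have no common zero on $\{x_N=0\}$, and likewise for the $g_j$ on $\{y_M=0\}$. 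On $\P^{N+M}$ I use homogeneous coordinates $[x_0:\cdots:x_{N-1}:y_0:\cdots:y_{M-1}:z]$ and set
\begin{equation*}
  \psi = [\, f_0(x,z):\cdots:f_{N-1}(x,z):g_0(y,z):\cdots:g_{M-1}(y,z):z^d \,],
\end{equation*}
where $f_i(x,z)$ denotes $f_i$ with $x_N$ replaced by $z$, and similarly for $g_j(y,z)$. Each entry is homogeneous of degree $d$ and the last is $z^d$, so $\psi$ is a degree $d$ polynomial map of $\P^{N+M}$.

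The first thing I would verify is that $\psi$ is a morphism. On $\{z=0\}$ the form $f_i(x,0)$ is exactly the restriction of $f_i$ to the hyperplane at infinity of $\P^N$, so by the morphism hypothesis $f_0(x,0)=\cdots=f_{N-1}(x,0)=0$ forces $x_0=\cdots=x_{N-1}=0$, and symmetrically $g_0(y,0)=\cdots=g_{M-1}(y,0)=0$ forces $y_0=\cdots=y_{M-1}=0$. Hence the only common zero of the components of $\psi$ on $\{z=0\}$ is the origin, so $\psi$ is a morphism.

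Next I would track the periodic point. The periodic points relevant here lie in the affine chart --- for instance the point $[0,\dots,0,1]$ supplied by Theorems~\ref{thm_main}--\ref{thm2} has last coordinate $1$ --- so normalize $P_1 = [a_0:\cdots:a_{N-1}:1]$ and $P_2 = [b_0:\cdots:b_{M-1}:1]$ and set $P = [a_0:\cdots:a_{N-1}:b_0:\cdots:b_{M-1}:1]$. Since $x_N(P_1)=1$ forces the last coordinate of $\phi_1(P_1)$ to be $1$, the normalized representative of $\phi_1(P_1)$ has $i$th coordinate $f_i(a,1)$, and likewise for $\phi_2$; so $\psi(P)$ is, in normalized form, the point recording $\phi_1(P_1)$ in its first $N$ coordinates, $\phi_2(P_2)$ in the next $M$, and $1$ in the last. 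Iterating, $\psi^k(P)$ records $\phi_1^k(P_1)$ and $\phi_2^k(P_2)$ the same way, and since two such normalized points agree iff their affine parts agree, $\psi^k(P)=P$ iff $\phi_1^k(P_1)=P_1$ and $\phi_2^k(P_2)=P_2$. As $P_1$ has primitive period $n$ and $P_2$ has primitive period $m$, this happens iff $n\mid k$ and $m\mid k$, i.e. iff $\lcm(n,m)\mid k$; so $P$ has primitive period exactly $\lcm(n,m)$.

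The one delicate point is the interplay of the morphism condition with the degrees. If $\deg\phi_1\neq\deg\phi_2$ one cannot equalize degrees by padding the lower-degree forms with a power of $z$: that would make all of the corresponding components of $\psi$ vanish on $\{z=0\}$, destroying the morphism property. To treat the general case I would first replace $\phi_i$ by a polynomial morphism of a common degree $D$ that induces the same cyclic permutation on the (finite) cycle of $P_i$ --- for example a map $[x_0^D+(\text{lower order}):\cdots:x_{N-1}^D+(\text{lower order}):x_N^D]$, whose lower-order terms are chosen to realize the finitely many cycle conditions and whose leading ``Fermat'' part guarantees it is a morphism --- and then run the product construction above. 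For Theorem~\ref{thm3} itself this is unnecessary, since all maps there have degree $2$.
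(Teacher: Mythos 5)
Your proof is correct and follows essentially the same route as the paper: dehomogenize to the common affine chart, form the product map, and rehomogenize, observing that the morphism condition decouples because the two blocks of forms share only the homogenizing variable $z=x_{N+M}$, and that the primitive period of the product point is $\lcm(n,m)$. You flag one genuine subtlety the paper leaves implicit --- the construction as written requires $\deg\phi_1=\deg\phi_2$, since padding the lower-degree block with powers of $z$ would make those components vanish on $\{z=0\}$ and spoil the morphism condition --- and you correctly note this is immaterial for Theorem~\ref{thm3}, where every map has degree $2$.
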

\begin{proof}
    We restrict $\phi_1$ to the affine chart $\A^N$ with $x_N \neq 0$ and $\phi_2$ to the affine chart $\A^M$ with $x_M \neq 0$.  The restricted points $\widetilde{P_1}$ and $\widetilde{P_2}$ still have period $n$ and $m$, and the product map $\widetilde{\phi_1} \times \widetilde{\phi_2}: \A^{N+M} \to \A^{N+M}$ has the product of the dehomogenizations $\widetilde{P}=(\widetilde{P_1},\widetilde{P_2})$ as a periodic point of primitive period $\lcm(n,m)$.  This fact is simply the statement that the product of a cyclic group of order $n$ with a cyclic group of order $m$ has order $\lcm(n,m)$.

    Now, homogenizing $\widetilde{\phi_1} \times \widetilde{\phi_2}$ to a map $\psi:\P^{N+M} \to \P^{N+M}$ we know that the first $N$ forms and $x_{N+M}^2$ have no common nontrivial zeros in $x_0,\ldots,x_{N-1},x_{N+M}$ and the next $M$ forms and $x_{N+M}^2$ have no common nontrivial zeros in $x_N,\ldots,x_{N+M}$. Since the only variable shared between the two sets of forms is $x_{N+M}$, the map $\psi$ is also a morphism and the homogenization of $\widetilde{P}$ has primitive period $\lcm(n,m)$ for $\psi$.
\end{proof}

\begin{proof}[Proof of Theorem \ref{thm3}]
    From Theorem \ref{thm_main} and Theorem \ref{thm2} we can find morphisms $\phi:\P^N \to \P^N$ with $\Q$-rational periodic points with primitive period $1,2,\ldots, \frac{(N+1)(N+2)}{2}$.  Fix $s$ a positive integer.  Let $M=\lfloor N/s \rfloor$. Then $\frac{(M+1)(M+2)}{2} > \frac{(N/s)(N/s)}{2} = \frac{N^2}{2s^2}$ and for every prime $p \leq \frac{N^2}{2s^2}$ there is a point with primitive period $p$ for some polynomial morphism of $\P^M$. Fix $\epsilon >0$ and choose $N$ large enough that the interval $((1-\epsilon)N^2/2s^2, N^2/2s^2)$ has at least $s$ primes $p_1,\ldots, p_s$.  We apply Lemma \ref{lem2} to combine these points and associated morphisms to get a point $P \in \P^{sM} = \P^{N}$, which has primitive period
    \begin{equation*}
        p_1\cdots p_s \geq \frac{(1-\epsilon)^s}{2^2s^{2s}}N^{2s}
    \end{equation*}
    for a polynomial morphism $\psi:\P^{N} \to \P^{N}$.
\end{proof}

\section{Some examples with larger primitive periods}
    With slightly different choices of coefficients, it is occasionally possible to increase the primitive period by more than $2$ with a choice of a single set of coefficients.  While a general method to ensure this occurrence was not discovered, in practice it is possible to construct a polynomial map for a specific $N$ with a periodic point with primitive period that exceeds the bound presented in Theorem \ref{thm_main}.  These can then be combined as in Lemma \ref{lem2} to produce morphisms of $\P^N$ with $\Q$-rational periodic points of large primitive period.  The following examples present such maps for $N=2$, $3$, and $4$.
    For the reader's convenience, the following table shows the trivial lower bound $\binom{N+2}{2}$, the lower bound from Theorem \ref{thm_main}, and the primitive period exhibited in the example of a polynomial morphism $\phi:\P^N \to \P^N$.  Note that since we are dealing with maps on $\P^N$ outside of the scope of Theorem \ref{thm2}, the maps were verified explicitly to be morphisms, but the details are omitted here.
    \begin{center}
      \begin{tabular}{|c|c|c|c|}
        \hline
        N & trivial bound & Theorem \ref{thm_main} bound & example period\\
        \hline
        2 & 6 & 7 & 9\\
        \hline
        3 & 10 & 11 & 24\\
        \hline
        4 & 15 & 16 & 72\\
        \hline
      \end{tabular}
    \end{center}

    \begin{exmp}\label{exmp1}
        The point $[0,0,1] \in \P^2$ is a periodic point of primitive period $9$ for the morphism
        \begin{align*}
            \phi([&x_0,x_1,x_2])=\\
                [&-38/45x_0^2 + (2x_1 - 7/45x_2)x_0 + (-1/2x_1^2 - 1/2x_2x_1 + x_2^2),\\
                &-67/90x_0^2 + (2x_1 + 157/90x_2)x_0 - x_2x_1,x_2^2].\\
        \end{align*}
    \end{exmp}
    \begin{exmp}
        The point $[0,0,0,1] \in \P^3$ is a periodic point of primitive period $24$ for the morphism
        \begin{align*}
            \phi([&x_0,x_1,x_2,x_3])=\\
            [&(-x_1 - x_3)x_0 + (-13/30x_1^2 + 13/30x_3x_1 + x_3^2),\\
            & -1/2x_0^2 + (-x_1 + 3/2x_3)x_0 + (-1/3x_1^2 + 4/3x_3x_1),\\
            &-3/2x_2^2 + 5/2x_2x_3 + x_3^2,x_3^2]
        \end{align*}
        created by combining a periodic point of primitive period $8$ in $\P^2$ and a periodic point of primitive period $3$ in $\P^1$.
    \end{exmp}

    \begin{exmp}
        The point $[0,0,0,0,1] \in \P^4$ is a periodic point of primitive period $72$ for the morphism
        \begin{align*}
            \phi([&x_0,x_1,x_2,x_3,x_4])=\\
                [&-38/45x_0^2 + (2x_1 - 7/45x_4)x_0 + (-1/2x_1^2 - 1/2x_4x_1 + x_4^2),\\
                &-67/90x_0^2 + (2x_1 + 157/90x_4)x_0 - x_4x_1,\\
                &(-x_3 - x_4)x_2 + (-13/30x_3^2 + 13/30x_4x_3 + x_4^2),\\
                 &-1/2x_2^2 + (-x_3 + 3/2x_4)x_2 + (-1/3x_3^2 + 4/3x_4x_3),x_4^2].\\
        \end{align*}
        created by combining periodic points of primitive period $8$ and $9$ in $\P^2$.
    \end{exmp}

\bibliographystyle{abbrv}	
\bibliography{masterlist}

\end{document}